\documentclass[11pt,fleqn]{article}
\usepackage{amsmath,amssymb,amsthm,amsfonts}
\usepackage{geometry}
\newtheorem{theorem}{Theorem}
\newtheorem{lemma}{Lemma}
\newtheorem{definition}{Definition}
\newtheorem{proposition}{Proposition}

\newcommand{\Z}{\mathbb{Z}}
\newcommand{\R}{\mathbb{R}}
\newcommand{\T}{\mathbb{T}}

\newcommand{\norm}[1]{\left\|#1\right\|}
\newcommand{\abs}[1]{\left|#1\right|}
\newcommand{\floor}[1]{\left\lfloor#1\right\rfloor}
\newcommand{\dist}{\operatorname{dist}}

\title{\textbf{Proof of the Finiteness of the Chromatic Number of Two-Dimensional Lacunary Integer Distance Graphs}}
\author{\textbf{Nabh Singh}}
\date{May 5, 2026}

\begin{document}

\maketitle

\begin{abstract}
We extend the one-dimensional lonely set method to two dimensions for the purpose of studying the chromatic number of integer distance graphs in two dimensions. Given a lacunary sequence of displacement vectors in $\Z^2$, we use a lacunary matrix theorem given by Broderick, Fishman and Kleinbock, to prove the existence of a satisfactory multiplier vector. We then give an explicit geometric colouring argument. This proves that any integer distance graph generated by a lacunary sequence of vectors in two dimensions has finite chromatic number.
\end{abstract}

\section{Introduction}

Let
\[
D=\{\mathbf d_k\}_{k=1}^{\infty}\subset \Z^2\setminus\{\mathbf 0\}
\]
be a sequence of distinct displacement vectors. The integer distance graph $G(\Z^2,D)$ is defined as the graph with vertex set $V=\Z^2$, where an undirected edge exists between $\mathbf u,\mathbf v\in \Z^2$ if and only if
\[
\mathbf u-\mathbf v\in D
\quad\text{or}\quad
\mathbf v-\mathbf u\in D.
\]

In the one-dimensional setting, where $V=\Z$, it is a celebrated result of Katznelson \cite{katznelson} and Peres--Schlag \cite{peresschlag} that integer distance graphs associated with lacunary distance sets have finite chromatic number. In this note, we prove an analogous finiteness statement for displacement vectors in $\Z^2$ satisfying exponential growth in Euclidean norm.

\begin{definition}[2D lacunary sequence]
A sequence of vectors
\[
D=\{\mathbf d_k\}_{k=1}^{\infty}\subset \Z^2\setminus\{\mathbf 0\}
\]
is said to be lacunary if there exists a constant $r>1$ such that, for all $k\ge 1$,
\begin{equation}
    \norm{\mathbf d_{k+1}}_2 \ge r\norm{\mathbf d_k}_2,
\end{equation}
where $\norm{\cdot}_2$ denotes the Euclidean norm.
\end{definition}

\section{Lonely vector sets and torus-bin colourings}

Let
\[
\norm{x}_{\T}=\min_{m\in \Z}\abs{x-m}
\]
denote the distance from $x\in \R$ to the nearest integer.

\begin{definition}[Lonely vector set]
A distance set $D\subset \Z^2$ is called \emph{lonely} if there exist a multiplier vector
\[
\boldsymbol\alpha=(\alpha_1,\alpha_2)\in [0,1)^2
\]
and a constant $\delta>0$ such that
\begin{equation}
    \norm{\langle \mathbf d,\boldsymbol\alpha\rangle}_{\T}\ge \delta
    \qquad
    \forall \mathbf d\in D,
\end{equation}
where $\langle\cdot,\cdot\rangle$ denotes the standard inner product.
\end{definition}

We now show that loneliness implies finite colourability. The proof uses a two-dimensional torus-bin construction.

\begin{lemma}[Torus-bin colouring lemma]
Let $D\subset \Z^2\setminus\{\mathbf 0\}$ be a lonely vector set with gap $\delta>0$. Then
\begin{equation}
    \chi(G(\Z^2,D))
    \le
    \left(\floor{\frac{2}{\delta}}+1\right)^2.
\end{equation}
\end{lemma}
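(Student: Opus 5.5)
The plan is to colour each vertex $\mathbf u\in\Z^2$ according to where the single real number $\langle \mathbf u,\boldsymbol\alpha\rangle$ falls modulo $1$. This already gives a bound stronger than the claimed one, and we then compare. Fix the multiplier $\boldsymbol\alpha$ and gap $\delta$ from the definition of a lonely vector set. Note first that $\norm{x}_{\T}\le 1/2$ for every real $x$. So if $D\neq\emptyset$ we must have $\delta\le 1/2$. If $D=\emptyset$ the graph has no edges and one colour suffices.

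Next, let $M=\lceil 1/\delta\rceil$ and partition $[0,1)$ into the $M$ half-open intervals
\[
I_j=\Big[\tfrac{j}{M},\tfrac{j+1}{M}\Big),\qquad j=0,\dots,M-1,
\]
each of length $1/M\le\delta$. Define $c(\mathbf u)=j$ when the fractional part $\{\langle \mathbf u,\boldsymbol\alpha\rangle\}$ lies in $I_j$. To check that $c$ is proper, suppose $\mathbf u,\mathbf v$ are adjacent, so $\pm(\mathbf u-\mathbf v)=\mathbf d\in D$. Suppose also, for contradiction, that $c(\mathbf u)=c(\mathbf v)$. Then the two fractional parts lie in a common half-open interval of length at most $\delta$, so they differ by strictly less than $\delta$. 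Since $\langle \mathbf u,\boldsymbol\alpha\rangle-\langle \mathbf v,\boldsymbol\alpha\rangle$ differs from that difference of fractional parts by an integer, we get $\norm{\langle \mathbf d,\boldsymbol\alpha\rangle}_{\T}<\delta$, using linearity of the inner product and $\norm{-x}_{\T}=\norm{x}_{\T}$. This contradicts loneliness. Hence $\chi(G(\Z^2,D))\le M$.

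Finally, compare with the stated bound:
\[
\lceil 1/\delta\rceil\le \floor{1/\delta}+1\le \floor{2/\delta}+1\le\Big(\floor{2/\delta}+1\Big)^2 .
\]
If one prefers a genuinely two-dimensional torus-bin picture matching the square in the statement, the same argument can be phrased by binning the point $(\{u_1\alpha_1\},\{u_2\alpha_2\})\in\T^2$ into a grid of $(\floor{2/\delta}+1)^2$ squares of side less than $\delta/2$. Two vertices in the same square then have $\norm{d_1\alpha_1}_{\T},\norm{d_2\alpha_2}_{\T}<\delta/2$, so $\norm{\langle\mathbf d,\boldsymbol\alpha\rangle}_{\T}<\delta$ by the triangle inequality for $\norm{\cdot}_{\T}$.

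The main point needing care is the strictness of the inequality. Half-open bins of length at most $\delta$ give differences strictly less than $\delta$, which is exactly what contradicts the non-strict lower bound $\ge\delta$ in the definition. This is the reason for using $\lceil 1/\delta\rceil$ bins rather than closed intervals.
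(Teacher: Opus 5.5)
Your proof is correct, but your main argument takes a different route from the paper.

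The paper's proof works as follows:
\begin{itemize}
\item it uses the coordinate-wise map $\mathbf v\mapsto(v_1\alpha_1,\,v_2\alpha_2)\bmod 1$ into $\T^2$;
\item it applies the triangle inequality $\norm{a_1+a_2}_{\T}\le 2\max_i\norm{a_i}_{\T}$ to show that adjacent vertices are at $\ell_\infty$-distance at least $\delta/2$;
\item it then bins $\T^2$ into $(\floor{2/\delta}+1)^2$ half-open squares.
\end{itemize}
Your closing remark about the two-dimensional picture is exactly this argument, run contrapositively.

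Your primary argument instead colours by the single number $\langle\mathbf u,\boldsymbol\alpha\rangle\bmod 1$. This is precisely the quantity that loneliness controls, so you need no triangle inequality and lose no factor of $2$. You obtain $\chi(G(\Z^2,D))\le\lceil 1/\delta\rceil$, which is linear rather than quadratic in $1/\delta$. Your comparison chain $\lceil 1/\delta\rceil\le\floor{1/\delta}+1\le\floor{2/\delta}+1\le(\floor{2/\delta}+1)^2$ correctly recovers the stated bound. Your handling of strictness via half-open bins is also right.

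Your route also shows that the dimension plays no role in the colouring step. The same argument gives $\lceil 1/\delta\rceil$ colours for any lonely set in $\Z^n$. By contrast, the paper's coordinate-wise binning would degrade to $(\floor{n/\delta}+1)^n$ colours. The only things the paper's version offers are that it reproduces the stated bound exactly and keeps the \emph{torus-bin} framing.
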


\begin{proof}
Let $\boldsymbol\alpha=(\alpha_1,\alpha_2)\in [0,1)^2$ satisfy
\[
\norm{\langle \mathbf d,\boldsymbol\alpha\rangle}_{\T}\ge \delta
\qquad
\forall \mathbf d\in D.
\]

Define the coordinate-wise torus map
\[
\Phi_{\boldsymbol\alpha}:\Z^2\to \T^2
\]
by
\begin{equation}
\Phi_{\boldsymbol\alpha}(\mathbf v)
=
(v_1\alpha_1 \bmod 1,\; v_2\alpha_2 \bmod 1).
\end{equation}

We use the $\ell_\infty$ metric on $\T^2$, defined by
\[
\rho_\infty(\mathbf x,\mathbf y)
=
\max_{i=1,2}\norm{x_i-y_i}_{\T}.
\]

Let $\mathbf u,\mathbf v\in \Z^2$ be adjacent vertices in $G(\Z^2,D)$. Then
\[
\mathbf u-\mathbf v=\pm \mathbf d
\]
for some $\mathbf d\in D$. Write
\[
a_i=(u_i-v_i)\alpha_i
\qquad
(i=1,2).
\]
Then
\[
\langle \mathbf u-\mathbf v,\boldsymbol\alpha\rangle
=
a_1+a_2.
\]

By the triangle inequality on $\T$,
\[
\norm{a_1+a_2}_{\T}
\le
\norm{a_1}_{\T}+\norm{a_2}_{\T}.
\]
Therefore
\[
\norm{a_1+a_2}_{\T}
\le
2\max\{\norm{a_1}_{\T},\norm{a_2}_{\T}\}.
\]
\begin{equation}
\implies \max\{\norm{a_1}_{\T},\norm{a_2}_{\T}\}
\ge
\frac12\norm{a_1+a_2}_{\T}.
\end{equation}

Since
\[
a_1+a_2
=
\langle \mathbf u-\mathbf v,\boldsymbol\alpha\rangle,
\]
we obtain
\begin{align}
\rho_\infty\!\left(
\Phi_{\boldsymbol\alpha}(\mathbf u),
\Phi_{\boldsymbol\alpha}(\mathbf v)
\right)
&=
\max_{i=1,2}\norm{(u_i-v_i)\alpha_i}_{\T} \\
&\ge
\frac12
\norm{\langle \mathbf u-\mathbf v,\boldsymbol\alpha\rangle}_{\T} \\
&=
\frac12
\norm{\langle \mathbf d,\boldsymbol\alpha\rangle}_{\T} \\
&\ge
\frac{\delta}{2}.
\end{align}

Thus adjacent vertices are separated by at least $\delta/2$ in the $\ell_\infty$ torus metric.

Now choose
\[
M=\floor{\frac{2}{\delta}}+1.
\]
Then,
\[
\frac1M<\frac{\delta}{2}.
\]
Partition $[0,1)^2$ into $M^2$ half-open square bins
\[
Q_{ij}
=
\left[\frac{i}{M},\frac{i+1}{M}\right)
\times
\left[\frac{j}{M},\frac{j+1}{M}\right),
\]
where $0\le i,j\le M-1$.

Assign one colour to each square bin. Namely, define
\[
c:\Z^2\to \{0,\dots,M-1\}^2
\]
by
\[
c(\mathbf v)=(i,j)
\quad\Longleftrightarrow\quad
\Phi_{\boldsymbol\alpha}(\mathbf v)\in Q_{ij}
\]

If two vertices $\mathbf u,\mathbf v$ receive the same colour, then their images lie in the same square bin. In other words,
\[
\rho_\infty\!\left(
\Phi_{\boldsymbol\alpha}(\mathbf u),
\Phi_{\boldsymbol\alpha}(\mathbf v)
\right)
<
\frac1M
<
\frac{\delta}{2}
\]
However, adjacent vertices satisfy
\[
\rho_\infty\!\left(
\Phi_{\boldsymbol\alpha}(\mathbf u),
\Phi_{\boldsymbol\alpha}(\mathbf v)
\right)
\ge
\frac{\delta}{2}
\]
which is a contradiction.
Thus $c$ is a proper colouring of $G(\Z^2,D)$ using $M^2$ colours, and
\[
\chi(G(\Z^2,D))
\le
M^2
=
\left(\floor{\frac{2}{\delta}}+1\right)^2
\]
\end{proof}

\section{Existence of a lonely multiplier}

We now use an existing theorem on lacunary matrix sequences to obtain the multiplier vector $\boldsymbol\alpha$.

\begin{theorem}[Broderick--Fishman--Kleinbock, lacunary matrix case]
Let $(M_k)_{k=1}^{\infty}$ be a lacunary sequence of real $m\times n$ matrices. Let $(Z_k)_{k=1}^{\infty}$ be a sequence of uniformly discrete subsets of $\R^m$. Then, the set
\[
\widetilde E(M,Z)
=
\left\{
\mathbf x\in \R^n:
\inf_{k\ge 1}\dist(M_k\mathbf x,Z_k)>0
\right\}
\]
is nonempty.
\end{theorem}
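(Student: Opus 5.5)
The plan is a nested-ball (Cantor-type) construction in $\R^n$ that avoids the bad sets $\{\mathbf x:\dist(M_k\mathbf x,Z_k)<\varepsilon\}$ one scale at a time. I will read the hypotheses as follows: lacunarity means $\norm{M_{k+1}}\ge r\norm{M_k}$ for the operator norm, with $r>1$ fixed. Uniform discreteness means there is a single $s>0$ with $\abs{\mathbf z-\mathbf z'}\ge s$ for all distinct $\mathbf z,\mathbf z'\in Z_k$ and all $k$. By discarding finitely many indices and rescaling, I may assume $\norm{M_1}\ge 1$. For each $k$ fix a unit vector $\mathbf v_k\in\R^n$ with $\abs{M_k\mathbf v_k}=\norm{M_k}$.

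\emph{One-matrix step.} Let $B$ be a ball of radius $\rho$ centred at $\mathbf x_0$, with $\rho\norm{M_k}\ge C s$ for a large absolute constant $C$. Consider the segment $t\mapsto \mathbf x_0+t\mathbf v_k$, $\abs t\le\rho$. Its image under $M_k$ is a straight segment of length $2\rho\norm{M_k}$, traversed at speed $\norm{M_k}$. By $s$-separation, at most $O(\rho\norm{M_k}/s)$ points of $Z_k$ lie within $\varepsilon$ of this image. Each such point excludes a $t$-interval of length at most $2\varepsilon/\norm{M_k}$. Hence the excluded proportion of the parameter interval is $O(\varepsilon/s)$, and for $\varepsilon\le c s$ some subinterval of length $\gtrsim s/\norm{M_k}$ is entirely good. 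Around a point of that subinterval, the map $M_k$ moves points by at most $\norm{M_k}\eta$ over a ball of radius $\eta$. So there is a subball $B'\subset B$ of radius $\rho'=\varepsilon/\norm{M_k}$ on which $\dist(M_k\mathbf x,Z_k)\ge\varepsilon$.

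\emph{Handling slow lacunarity.} The new radius $\rho'\approx\varepsilon/\norm{M_k}$ is too small for the next step, because it only gives $\rho'\norm{M_{k+1}}\ge r\varepsilon$, which is not $\ge Cs$ when $r$ is close to $1$. I will fix this in one of two ways.
\begin{enumerate}
\item \emph{Blocking.} Choose $N$ with $r^N\varepsilon\ge Cs$, so that indices $N$ apart have norms differing by a factor $\ge r^N$. Treat the $N$ consecutive matrices $M_k,\dots,M_{k+N-1}$ in the same ball simultaneously.
\item \emph{Measure variant.} Inside the current ball $B$ of radius $\rho\ge Cs/\norm{M_k}$, estimate the Lebesgue measure of $\{\mathbf x\in B:\dist(M_j\mathbf x,Z_j)<\varepsilon\}$ for each $j$ in the block. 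I would do this by fibering $B$ into lines parallel to $\mathbf v_j$ and applying the segment count above on each fibre. This gives proportion $O(\varepsilon/s)$ per matrix, provided $\rho\norm{M_j}\ge Cs$, which holds since $\norm{M_j}\ge\norm{M_k}$.
\end{enumerate}
Summing over the block gives total bad proportion $O(N\varepsilon/s)$. Since $N\approx\log(Cs/\varepsilon)/\log r$, this is $O\bigl(\varepsilon\log(1/\varepsilon)/s\bigr)$, which is $<1/2$ for small $\varepsilon$. So a positive proportion of $B$ is good for the whole block.

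From a good point in that set I then need a subball of radius $\approx\varepsilon/\norm{M_{k+N-1}}$ that is entirely good, using the Lipschitz bound of the largest matrix in the block. This radius times $\norm{M_{k+N}}$ is $\ge r\varepsilon$. That is still not $\ge Cs$, so I would instead align the blocks so that radii shrink by a bounded factor per block, taking blocks of length $N$ and radius $\varepsilon/\norm{M_{k}}$ after processing the block starting at $k$ but ending before the next one. Getting this bookkeeping exactly right is the step I expect to be the main obstacle. The natural fix is the measure version: show the good set contains a ball of radius $\varepsilon'/\norm{M_{k+N-1}}$ by a covering argument (good set of positive proportion, bad set a union of thin slabs) and choose $\varepsilon'$ and $N$ consistently.

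\emph{Conclusion.} Iterating produces nested closed balls $B_1\supset B_2\supset\cdots$ with radii tending to zero. The limit point $\mathbf x$ satisfies $\dist(M_k\mathbf x,Z_k)\ge\varepsilon$ for every $k$ beyond the discarded finitely many indices. For those finitely many indices, I would restrict the initial ball beforehand: each bad set is a locally finite union of thin neighbourhoods of affine subspaces, so its complement contains a ball, and shrinking $\varepsilon$ if needed keeps the construction inside it. Hence $\widetilde E(M,Z)\neq\emptyset$.
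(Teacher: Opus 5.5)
The paper does not prove this statement. It quotes it from Broderick--Fishman--Kleinbock, where it comes out of a Schmidt-game (winning-set) argument. Your nested-ball route would be a different and more elementary proof. As written, though, it has a real gap at exactly the step you flag, and the gap is not just bookkeeping: neither invariant you propose can be carried from one block to the next.

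\textbf{Why the invariants fail.} You start each block with $\rho\norm{M_k}\ge Cs$, $C$ large, and the ball $B'$ you pass on must be entirely good for $M_{k+N-1}$. Lacunarity permits $\norm{M_{k+N}}=r\norm{M_{k+N-1}}$. Then $\rho'\norm{M_{k+N}}\ge Cs$ forces $\rho'\norm{M_{k+N-1}}\ge Cs/r$. For $m=1$ and $Z_j=s\Z$ (the case the paper uses), $M_{k+N-1}(B')$ is an interval of length at least $2Cs/r$. Once $C>r/2$ this exceeds $s$, so the interval meets $Z_{k+N-1}$ and no such $B'$ exists.

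Your fallback fails in the opposite direction. Take a point good at level $2\varepsilon$ and thicken it by the Lipschitz bound to radius $\varepsilon/\norm{M_{k+N-1}}$. This hands the next block a ball with $\rho'\norm{M_{k+N}}$ as small as $r\varepsilon$. The ``$+1$'' terms of your segment count then contribute about $\sum_{i\ge1}r^{-i}=1/(r-1)$ to the bad proportion over that block. This exceeds $1$ for $r<2$, so the union bound gives nothing.

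Finally, ``the good set has positive proportion'' does not by itself produce a good ball of a prescribed radius. The covering argument you mention is missing its key input.

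\textbf{The repair.} Use a small (not large) invariant together with a graded condition on the new centre.
\begin{itemize}
\item Keep $\rho_k\norm{M_k}=a:=c_0 s$ with $c_0$ small compared with $r-1$, and set $\rho'=a/\norm{M_{k+N}}$.
\item Choose the new centre $\mathbf x\in B(\mathbf x_0,\rho-\rho')$ with $\dist(M_j\mathbf x,Z_j)\ge\varepsilon+\rho'\norm{M_j}$ for $k\le j<k+N$.
\item Then all of $B(\mathbf x,\rho')\subset B$ is good at level $\varepsilon$ for the block.
\end{itemize}
Your fibre count along $\mathbf v_j$ still works, provided you keep the $+1$ term rather than assuming $\rho\norm{M_j}\gg s$. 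It bounds the excluded proportion by
\[
\sum_{j=k}^{k+N-1}\Bigl(\frac{\varepsilon+\rho'\norm{M_j}}{s}+\frac{\varepsilon+\rho'\norm{M_j}}{\rho\norm{M_j}}\Bigr)
\lesssim\frac{N\varepsilon}{s}+\frac{c_0}{r-1}+\frac{r\varepsilon}{c_0 s(r-1)}+Nr^{-N}.
\]
This uses $\rho'\norm{M_j}\le a\,r^{-(k+N-j)}$, $\rho\norm{M_j}\ge a\,r^{j-k}$ and $\rho'/\rho\le r^{-N}$. Choose $c_0$ first, then $N$, then $\varepsilon$; this makes the bound less than $1$. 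Note that $N$ is independent of $\varepsilon$, unlike your choice $r^N\varepsilon\ge Cs$.

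Starting from any closed ball of radius $a/\norm{M_1}$ handles every index. The preliminary discarding of indices and the final patch are therefore unnecessary. Repaired this way, the argument is a self-contained substitute for the citation, though it yields only nonemptiness rather than the winning property Broderick--Fishman--Kleinbock obtain.
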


\begin{proposition}
Let
\[
D=\{\mathbf d_k\}_{k=1}^{\infty}\subset \Z^2\setminus\{\mathbf 0\}
\]
be lacunary. Then $D$ is lonely.
\end{proposition}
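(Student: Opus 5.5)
The plan is to apply the Broderick--Fishman--Kleinbock theorem with $m=1$, $n=2$. Each displacement vector becomes a $1\times 2$ matrix, $M_k=\mathbf d_k^{T}$, so that $M_k\mathbf x=\langle \mathbf d_k,\mathbf x\rangle$. The target sets are $Z_k=\Z\subset\R$ for every $k$. This family is uniformly discrete, since any two distinct points are at distance at least $1$.

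\textbf{Step 1: the matrices form a lacunary sequence.} The operator norm of the row matrix $\mathbf d_k^{T}:\R^2\to\R$, with Euclidean norm on $\R^2$, is exactly $\norm{\mathbf d_k}_2$. So the hypothesis $\norm{\mathbf d_{k+1}}_2\ge r\norm{\mathbf d_k}_2$ is literally the growth condition $\norm{M_{k+1}}\ge r\norm{M_k}$.

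\textbf{Step 2: apply the theorem.} The theorem then yields some $\mathbf x\in\R^2$ with
\[
\delta:=\inf_{k\ge1}\dist\bigl(\langle \mathbf d_k,\mathbf x\rangle,\Z\bigr)=\inf_{k\ge 1}\norm{\langle \mathbf d_k,\mathbf x\rangle}_{\T}>0 .
\]

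\textbf{Step 3: reduce the multiplier to $[0,1)^2$.} Write $\mathbf x=\boldsymbol\alpha+\mathbf m$ with $\mathbf m\in\Z^2$ and $\boldsymbol\alpha\in[0,1)^2$, taking coordinatewise fractional parts. Because $\mathbf d_k\in\Z^2$, the number $\langle\mathbf d_k,\mathbf m\rangle$ is an integer. Hence
\[
\norm{\langle\mathbf d_k,\boldsymbol\alpha\rangle}_{\T}=\norm{\langle\mathbf d_k,\mathbf x\rangle}_{\T}\ge\delta \qquad\text{for all } k .
\]
This is exactly the definition of a lonely set, with multiplier $\boldsymbol\alpha$ and gap $\delta$.

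\textbf{Main obstacle.} The hard step is making sure the paper's notion of lacunary matches the one in the cited theorem. Broderick--Fishman--Kleinbock may measure lacunarity with a different matrix norm, for example the sup norm.

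If so, norm equivalence gives only $\norm{M_{k+1}}'\ge (r/C^2)\norm{M_k}'$, and the ratio $r/C^2$ may fall below $1$. In that case I would split $D$ into $L$ subsequences $(\mathbf d_{jL+i})_{j\ge0}$ for $i=0,\dots,L-1$, choosing $L$ with $r^L>C^2$. Each subsequence is then lacunary in the other norm.

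Splitting, however, produces a separate good set $\widetilde E_i$ for each subsequence, and we need one point $\mathbf x$ lying in all of them. Mere nonemptiness of each $\widetilde E_i$ does not give this. I would resolve it by appealing to the stronger form of the BFK result, which states that $\widetilde E(M,Z)$ is winning for Schmidt's game. A finite intersection of winning sets is winning, and in particular nonempty, so some $\mathbf x\in\bigcap_i\widetilde E_i$ exists. Taking $\delta=\min_i\delta_i>0$ over the $L$ pieces then gives the uniform bound for the whole of $D$.
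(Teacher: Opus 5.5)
Your proof is correct and follows the paper's argument essentially verbatim. You view each $\mathbf d_k$ as a $1\times 2$ matrix whose operator norm is $\norm{\mathbf d_k}_2$, take $Z_k=\Z$, apply the Broderick--Fishman--Kleinbock theorem, and reduce the multiplier modulo $1$ using $\mathbf d_k\in\Z^2$. Your fallback for a possible norm mismatch is also sound and goes slightly beyond the paper: you split $D$ into $L$ lacunary subsequences and intersect finitely many winning sets, whereas the paper simply takes the operator norm to be the Euclidean one and uses only nonemptiness.
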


\begin{proof}
Write
\[
\mathbf d_k=(x_k,y_k).
\]
Regard $\mathbf d_k$ as a $1\times 2$ real matrix
\[
M_k=
\begin{pmatrix}
x_k & y_k
\end{pmatrix}.
\]
For $\boldsymbol\alpha=(\alpha_1,\alpha_2)\in \R^2$, we have
\[
M_k\boldsymbol\alpha
=
x_k\alpha_1+y_k\alpha_2
=
\langle \mathbf d_k,\boldsymbol\alpha\rangle.
\]

Moreover, the operator norm of $M_k$ is exactly
\[
\norm{M_k}_{\operatorname{op}}
=
\norm{\mathbf d_k}_2.
\]
Since $D$ is lacunary, there exists $r>1$ such that
\[
\norm{\mathbf d_{k+1}}_2
\ge
r\norm{\mathbf d_k}_2
\]
for every $k$. Hence
\[
\norm{M_{k+1}}_{\operatorname{op}}
\ge
r\norm{M_k}_{\operatorname{op}}.
\]
Thus $(M_k)$ is a lacunary sequence of $1\times 2$ real matrices.

Now take
\[
Z_k=\Z\subset \R
\]
for all $k$. The set $\Z$ is uniformly discrete. By the Broderick--Fishman--Kleinbock theorem, there exists
\[
\boldsymbol\alpha\in \R^2
\]
such that
\[
\inf_{k\ge 1}\dist(M_k\boldsymbol\alpha,\Z)>0.
\]
Therefore there exists $\delta>0$ such that
\[
\dist(M_k\boldsymbol\alpha,\Z)\ge \delta
\qquad
\forall k
\]
Equivalently,
\[
\norm{\langle \mathbf d_k,\boldsymbol\alpha\rangle}_{\T}\ge \delta
\qquad
\forall k
\]

Replace $\boldsymbol\alpha$ by its coordinate-wise reduction modulo $1$, which lies in $[0,1)^2$. Since each $\mathbf d_k$ has integer coordinates, this does not change
\[
\langle \mathbf d_k,\boldsymbol\alpha\rangle \pmod 1
\]
Thus there exist $\boldsymbol\alpha\in [0,1)^2$ and $\delta>0$ such that
\[
\norm{\langle \mathbf d_k,\boldsymbol\alpha\rangle}_{\T}\ge \delta
\qquad
\forall k
\]
That is, $D$ is lonely.
\end{proof}

\section{Main theorem}

\begin{theorem}
Let
\[
D=\{\mathbf d_k\}_{k=1}^{\infty}\subset \Z^2\setminus\{\mathbf 0\}
\]
be a lacunary sequence of vectors. Then the integer distance graph $G(\Z^2,D)$ has finite chromatic number.
\end{theorem}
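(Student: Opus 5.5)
The plan is to combine the two results already established. The Proposition gives loneliness of any lacunary $D\subset\Z^2\setminus\{\mathbf 0\}$, and the Torus-bin colouring lemma turns loneliness into an explicit colouring with boundedly many colours. So the proof will be a two-step chaining argument, with an explicit numerical bound on $\chi(G(\Z^2,D))$ in terms of the loneliness gap $\delta$.

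First, I will apply the Proposition to the lacunary sequence $D=\{\mathbf d_k\}$. This produces a multiplier $\boldsymbol\alpha\in[0,1)^2$ and a constant $\delta>0$ such that $\norm{\langle \mathbf d_k,\boldsymbol\alpha\rangle}_{\T}\ge\delta$ for every $k$. Hence $D$ is a lonely vector set with gap $\delta$. I will note that $\delta\le 1/2$ automatically, since $\norm{\cdot}_{\T}\le 1/2$, although this is not needed.

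Second, I will feed this $D$ and this $\delta$ into the Torus-bin colouring lemma. It yields
\[
\chi(G(\Z^2,D))\le\left(\floor{\frac{2}{\delta}}+1\right)^2<\infty .
\]
The colouring is the explicit one from that lemma. A vertex $\mathbf v$ receives the label of the square of side $1/M$ containing $(v_1\alpha_1 \bmod 1,\,v_2\alpha_2 \bmod 1)$, where $M=\floor{2/\delta}+1$. Since $\delta>0$ is a fixed constant depending only on $D$, the bound is finite, which is the assertion of the theorem.

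Given the earlier results, there is essentially no obstacle. The only point I would double-check is that the hypotheses line up. The Proposition requires exactly lacunarity in the Euclidean norm, as in the Definition, together with $\mathbf d_k\neq\mathbf 0$; nonvanishing is needed so that the operator norms of the $1\times 2$ matrices $M_k$ are positive and the Broderick--Fishman--Kleinbock theorem applies with $Z_k=\Z$. The lemma requires only loneliness with some positive gap. Undirected edges, coming from $\pm\mathbf d$, cause no issue, because $\norm{-x}_{\T}=\norm{x}_{\T}$.
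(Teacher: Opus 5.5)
Your proposal is correct and is exactly the paper's argument: apply the Proposition to get a multiplier $\boldsymbol\alpha\in[0,1)^2$ with gap $\delta>0$, then invoke the torus-bin colouring lemma to conclude $\chi(G(\Z^2,D))\le\left(\floor{\frac{2}{\delta}}+1\right)^2<\infty$. Your side checks are accurate, though the paper does not spell them out: $\mathbf d_k\neq\mathbf 0$ keeps the lacunarity ratios of the $M_k$ well defined, and $\norm{-x}_{\T}=\norm{x}_{\T}$ handles the undirected edges.
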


\begin{proof}
By the above Proposition 1, the lacunary sequence $D$ is lonely. Hence there exist
\[
\boldsymbol\alpha\in [0,1)^2
\quad\text{and}\quad
\delta>0
\]
such that
\[
\norm{\langle \mathbf d,\boldsymbol\alpha\rangle}_{\T}\ge \delta
\qquad
\forall \mathbf d\in D.
\]
Using the torus-bin colouring lemma, we get
\[
\chi(G(\Z^2,D))
\le
\left(\floor{\frac{2}{\delta}}+1\right)^2
\]
Therefore $G(\Z^2,D)$ has finite chromatic number.
\end{proof}

\end{document}